\documentclass[12pt]{amsart}
\usepackage{amsmath}
\usepackage{amsfonts}
\usepackage{amssymb}
\usepackage{amscd}
\usepackage{mathtools}
\usepackage{amsxtra}
\usepackage{amsthm}
\usepackage{graphicx}
\usepackage[abbrev,alphabetic]{amsrefs}
\usepackage[dvipsnames]{xcolor}
\usepackage{soul}
\setstcolor{red}
\usepackage{stmaryrd}
\usepackage{booktabs}
\usepackage{multirow}
\newtagform{tiny}{\tiny(}{)}
\usepackage{threeparttable}
\usepackage{aliascnt}

\usepackage{hyperref}
\usepackage[margin=1.25in]{geometry}
\usepackage{mathrsfs}

\usepackage{amsthm}
\usepackage{comment}
\usepackage[all,cmtip]{xy}
\usepackage{tikz-cd}
\usetikzlibrary{cd}

\tikzcdset{
  cells={font=\everymath\expandafter{\the\everymath\displaystyle}},
}

\usepackage{cleveref}

\makeatletter
\def\@tocline#1#2#3#4#5#6#7{\relax
  \ifnum #1>\c@tocdepth 
  \else
    \par \addpenalty\@secpenalty\addvspace{#2}%
    \begingroup \hyphenpenalty\@M
    \@ifempty{#4}{%
      \@tempdima\csname r@tocindent\number#1\endcsname\relax
    }{%
      \@tempdima#4\relax
    }%
    \parindent\z@ \leftskip#3\relax \advance\leftskip\@tempdima\relax
    \rightskip\@pnumwidth plus4em \parfillskip-\@pnumwidth
    #5\leavevmode\hskip-\@tempdima
      \ifcase #1
       \or\or \hskip 1em \or \hskip 2em \else \hskip 3em \fi%
      #6\nobreak\relax
    \hfill\hbox to\@pnumwidth{\@tocpagenum{#7}}\par
    \nobreak
    \endgroup
  \fi}
\makeatother

\renewcommand{\P}{\mathbb{P}}
\newcommand{\Z}{\mathbb{Z}}
\newcommand{\Q}{\mathbb{Q}}

\newcommand{\F}{\mathbb{F}}

\newcommand{\m}{\mathfrak{m}}

\newcommand{\Proj}{\mathrm{Proj}}

\def\var{\overline}

\DeclareMathOperator{\Spec}{Spec}

\DeclareMathOperator{\Ex}{Exc}

\DeclareMathOperator{\Ker}{Ker}

\DeclareMathOperator{\sht}{ht}

\theoremstyle{plain}
\newtheorem{theorem}{Theorem}[section]

\newtheorem{theoremA}{Theorem}

\crefname{theoremA}{Theorem}{Theorems}
\Crefname{theoremA}{Theorem}{Theorems}

\newaliascnt{lemma}{theorem}

\aliascntresetthe{lemma}
\crefname{lemma}{Lemma}{Lemmas}
\Crefname{lemma}{Lemma}{Lemmas}

\newaliascnt{proposition}{theorem}
\newtheorem{proposition}[proposition]{Proposition}
\aliascntresetthe{proposition}
\crefname{proposition}{Proposition}{Propositions}
\Crefname{proposition}{Proposition}{Propositions}

\newaliascnt{corollary}{theorem}

\aliascntresetthe{corollary}
\crefname{corollary}{Corollary}{Corollaries}
\Crefname{corollary}{Corollary}{Corollaries}

\newaliascnt{claim}{theorem}
\newtheorem{claim}[claim]{Claim}
\aliascntresetthe{claim}
\crefname{claim}{Claim}{Claims}
\Crefname{claim}{Claim}{Claims}

 \newtheorem*{claim*}{Claim}

\theoremstyle{definition}
\newaliascnt{definition}{theorem}

\aliascntresetthe{definition}
\crefname{definition}{Definition}{Definitions}
\Crefname{definition}{Definition}{Definitions}

\newaliascnt{notation}{theorem}
\newtheorem{notation}[notation]{Notation}
\aliascntresetthe{notation}
\crefname{notation}{Notation}{Notations}
\Crefname{notation}{Notation}{Notations}

\newaliascnt{example}{theorem}
\newtheorem{example}[example]{Example}
\aliascntresetthe{example}
\crefname{example}{Example}{Examples}
\Crefname{example}{Example}{Examples}

\theoremstyle{remark}
\newaliascnt{remark}{theorem}
\newtheorem{remark}[remark]{Remark}
\aliascntresetthe{remark}
\crefname{remark}{Remark}{Remarks}
\Crefname{remark}{Remark}{Remarks}

\newaliascnt{setting}{theorem}

\aliascntresetthe{setting}
\crefname{setting}{Setting}{Settings}
\Crefname{setting}{Setting}{Settings}

\newtheorem*{ackn}{Acknowledgements}

\newenvironment{claimproof}[0]
  {%
   \paragraph{\it Proof.}%
  }
  {%
    \hfill$\blacksquare$%
  }

\numberwithin{equation}{section}

\title[Non-quasi-$F$-split canonical affine fourfolds in every characteristic]{Non-quasi-$F$-split canonical affine fourfolds exist in every characteristic}

\author{Teppei Takamatsu}
\address{Department of Mathematics, Faculty of Science,
Saitama University,
255 Shimo-Okubo, Sakura-ku,
Saitama-shi, Saitama 338-8570,
Japan}
\email{teppeitakamatsu.math@gmail.com}

\author{Shou Yoshikawa}
\address{Institute of Science Tokyo, Tokyo 152-8551, Japan}
\email{yoshikawa.s.9fe9@m.isct.ac.jp}

\begin{document}

\begin{abstract}
We construct canonical $\Q$-factorial Gorenstein affine fourfolds in every positive characteristic that are not quasi-$F$-split.
\end{abstract}

\subjclass[2020]{14E30, 13A35}  

\keywords{quasi-F-split, positive characteristic}
\maketitle

\section{Introduction}
$F$-singularities play a central role in positive characteristic commutative algebra and algebraic geometry.
From the viewpoint of birational geometry, $F$-singularities are closely related to various classes of singularities arising in the minimal model program.
In particular, notions such as $F$-purity and strong $F$-regularity are regarded as positive characteristic analogues of log canonical and Kawamata log terminal (klt) singularities, respectively (cf.~\cite{Smith97}, \cite{hw02}).
In dimension two, the situation is relatively well understood.
It is known that klt surface singularities are $F$-pure in characteristic $p>5$ \cite{Hara98}.
In higher dimensions, however, the behavior becomes more subtle.
In dimension three, it is known that there exist canonical $\Q$-factorial singularities that are not $F$-pure in any characteristic \cite{CTW15a}.

As a generalization of $F$-purity, Yobuko \cite{Yobuko19} introduced the notion of quasi-$F$-splitting.
This concept retains many desirable features of $F$-purity while allowing greater flexibility, especially in geometric applications.
In fact, it has been shown that three-dimensional $\Q$-factorial klt singularities are quasi-$F$-split in characteristic $p>41$ \cite{KTTWYY2} (see also \cite{KTTWYY1}, \cite{KTTWYY3}).

In this paper, we construct $\Q$-factorial canonical Gorenstein affine fourfolds that are not quasi-$F$-split in any characteristic, as stated below.

\begin{theoremA}[\Cref{non-qfs-canonical}]\label{intro}
Let $k$ be an algebraically closed field of characteristic $p>0$.
Then there exists a $\Q$-factorial canonical Gorenstein affine fourfold $X$ over $k$
that is not quasi-$F$-split.
\end{theoremA}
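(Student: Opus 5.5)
We will realize $X$ as a cone over a smooth projective threefold, following the strategy of \cite{CTW15a} but with quasi-$F$-splitting in place of $F$-splitting. Choose a smooth projective threefold $Y$ over $k$ with $\omega_Y\cong\mathcal{O}_Y$ (or more generally $-K_Y$ nef and effective enough), together with an ample Cartier divisor $A$ such that $K_Y\sim_{\Q} -rA$ for some rational $r>0$. We then set
\[
X=\Spec \bigoplus_{m\ge 0} H^0(Y,\mathcal{O}_Y(mA)).
\]
Standard cone computations give the singularity properties. If $\operatorname{Pic}(Y)\otimes\Q=\Q\cdot A$ (for instance $\rho(Y)=1$), then $X$ is $\Q$-factorial. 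If $K_Y\sim -rA$ with $r\ge 1$ an integer, then $X$ is Gorenstein and canonical, with discrepancy $r-1\ge0$ along the exceptional divisor of the blow-up at the vertex. Taking $A$ sufficiently ample also gives projective normality, so that $X$ is normal and Cohen--Macaulay once $H^i(Y,\mathcal{O}(mA))=0$ for $0<i<3$ and all $m$.

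The key step is to reduce non-quasi-$F$-splitness of $X$ to a property of $Y$. We would use the known criterion for cones: the section ring $R(Y,A)$ is quasi-$F$-split if and only if $Y$ is quasi-$F$-split in a suitable sense compatible with $A$. An easier direction suffices for us: quasi-$F$-splitness localizes and passes to the punctured cone, which is a $\mathbb{G}_m$-bundle over $Y$. So if $X$ is quasi-$F$-split, then so is $Y$, by descending the splitting of $W_n\mathcal{O}_X\to F_*W_n\mathcal{O}_X$ along degree $0$. We would prove this descent by taking the degree-$0$ part of a graded splitting, using that the relevant Hom modules are graded.

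We therefore need a smooth Fano (or Calabi--Yau) threefold of Picard rank one that is not quasi-$F$-split, in every characteristic $p$. Candidates are non-liftable or non-quasi-$F$-split Fano threefolds, or Calabi--Yau-type examples with infinite quasi-$F$-split height. For Fano $Y$ we would use a result that quasi-$F$-split implies liftability to $W_2$ plus Kodaira-type vanishing. Then we would exhibit $Y$ violating this: a Fano-like threefold with a failure of Kodaira vanishing, or a failure of $W_2$-lifting, in each characteristic, for example a Raynaud/Mukai-type construction or a wild quotient. The main obstacle is finding such a $Y$ uniformly in all $p$ while keeping $\rho=1$ and $-K_Y$ a positive multiple of $A$, since Fano threefolds in large characteristic tend to be quasi-$F$-split. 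If this fails, the fallback is to allow $Y$ with $K_Y$ nef-trivial, e.g. a Calabi--Yau threefold of infinite height that is supersingular in the Artin--Mazur sense, which exists for each $p$ via products or quotients. In that case $r=0$ and we need log canonical rather than canonical, so we would instead take $X$ as the total space after twisting to reach discrepancy $\ge0$. We expect to resolve this by choosing $Y$ and checking $\Q$-factoriality separately via $\operatorname{Cl}(X)\cong \operatorname{Pic}(Y)/\Z A$.
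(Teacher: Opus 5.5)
There is a genuine gap. Your reduction stops exactly where the content of the theorem lies. Everything rests on a smooth projective threefold $Y$ with $\rho(Y)=1$, $K_Y\sim -rA$ for an integer $r\ge 1$, and $Y$ not quasi-$F$-split, in \emph{every} characteristic $p$. You never produce such a $Y$, and the routes you suggest do not lead to one.

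\begin{itemize}
\item The constructions you mention (Raynaud- or Mukai-type examples, wild quotients) do not yield smooth Fano threefolds of Picard rank one.
\item A failure of Kodaira vanishing along multiples of $A$ is excluded by your own requirements. Since $H^{i+1}_{\mathfrak m}(R(Y,A))\cong\bigoplus_{m\in\Z}H^i(Y,\mathcal O_Y(mA))$ for $i\ge 1$, Gorensteinness (indeed Cohen--Macaulayness) of $X$ forces $H^i(Y,\mathcal O_Y(mA))=0$ for $i=1,2$ and all $m\in\Z$. Cones over Kodaira-violating Fano varieties are exactly how one gets terminal singularities that are not Cohen--Macaulay.
\item You cannot take $A$ ``sufficiently ample'' either. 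Replacing $A$ by $cA$ replaces $r$ by $r/c$, and canonicity needs $r/c\ge 1$.
\item The fallback fails outright. If $K_Y\equiv 0$, then $r=0$ for every polarization, so the exceptional divisor over the vertex of any cone over $Y$ has discrepancy $-1$. Such cones are log canonical but never canonical (not even klt). No ``twisting'' helps, because $r$ is determined by $K_Y$.
\end{itemize}

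The missing idea is to leave the class of cones and use a Calabi--Yau of one dimension \emph{lower}. The paper takes a smooth supersingular quartic K3 surface $\Proj(k[x,y,z,w]/(f))$ satisfying an extra arithmetic condition:
\begin{itemize}
\item for $p\neq 2$, $f^{p-2}\in\mathfrak m^{[p]}$ (Artin invariant $1$, obtained from the Fermat quartic, Jang's family, or Schur's quartic depending on $p$);
\item for $p=2$, $\Delta(f)^{2^8-1}\in\mathfrak m^{[2^9]}$ together with $\sht(A/f)\ge 9$.
\end{itemize}
It then sets $X=\Spec k[x,y,z,w,t]/(f+t^{n})$ with $n=4p^9+1$.

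Blowing up the origin of $\{f+t^{m+4}=0\}$ produces the chart $\{f+t^{m}=0\}$ and an exceptional divisor of discrepancy $0$. This divisor is isomorphic to the projective cone over the K3 surface, which is rationally chain connected with Picard rank one. Iterating down to the smooth $\{f+t=0\}$ gives canonicity. $\Q$-factoriality then follows by induction using Cascini--Tanaka semiampleness, and Gorensteinness is automatic for a hypersurface.

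Non-quasi-$F$-splitness is checked directly on the equation with the Fedder-type criterion of Kawakami--Takamatsu--Yoshikawa. The conditions on $f$ propagate to $g=f+t^{l}$ and show that $k[x,y,z,w,t]/(g)$ is not quasi-$F$-split once $l\ge p$ (resp.\ $l\ge 2^9$ if $p=2$). Supersingularity alone would not suffice: for the paper's $f$ with $p=2$, the ring $k[x,y,z,w,t]/(f+t^m)$ is quasi-$F$-split for $m\le 511$. So both the non-cone equation and the extra condition on $f$ are essential, and neither appears in your plan.
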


\noindent
Note that such an example was known only in the case
$p=3$ (\cite{kty2}*{Example 4.29}).
The construction of the examples in \Cref{intro} is inspired by \cite{CTW15a}*{Section~5}.
In \cite{CTW15a}*{Section~5}, it is shown that if $f$ is a homogeneous polynomial defining a smooth supersingular (i.e.,\ non-$F$-split) elliptic curve, then the hypersurface $\F_p[x,y,z,t]/(f+t^p)$ is not $F$-pure.
However, when $f \in \F_p[x,y,z,w]$ defines a smooth supersingular (i.e.,\ non-quasi-$F$-split) K3 surface, an analogous hypersurface may still be quasi-$F$-split, as illustrated by the following example.

\begin{example}[\Cref{non-qfs-canonical}, \Cref{2^8-1}]
Let $k$ be a perfect field of characteristic $p=2$, let $A = k[x,y,z,w]$, and set
\[
f = x^4 + x^3y + y^3z + z^3w.
\]
Then $f$ defines a smooth supersingular K3 surface in $\mathbb{P}^3_k$.
The ring $A[t]/(f+t^m)$ is quasi-$F$-split for $m \leq 511$.
On the other hand, $A[t]/(f+t^{512})$ is not quasi-$F$-split.
\end{example}

\begin{remark}
In the forthcoming paper \cite{TY26}, we prove that for every quartic polynomial
$f\in k[x,y,z,w]$ defining a smooth supersingular K3 surface with Artin invariant $n$, the
hypersurface defined by $f+t^{p^{n}}$ is not quasi-$F$-split.
Furthermore, we provide a method to compute the Artin invariant of smooth quartic
K3 surfaces.
\end{remark}

\begin{ackn}
The authors wish to express their gratitude to Yuya Matsumoto, Hiromu Tanaka, Shunsuke Takagi, and Tatsuro Kawakami for valuable discussions.
The first author was supported by JSPS KAKENHI Grant Number JP25K17228.
The second author was supported by JSPS KAKENHI Grant Number JP24K16889.
\end{ackn}

\section{Construction}

\begin{notation}
Let $A$ be a polynomial ring over a perfect field $k$ of characteristic $p$.
For $g \in A$, write
\[
g=\sum_{j=1}^r M_j,
\]
where each $M_j$ is a monomial, and all of them have pairwise distinct multi-indices.
We define
\[
\Delta(g)
:= \sum_{\substack{0 \le \alpha_1,\ldots,\alpha_r \le p-1\\ \alpha_1+\cdots+\alpha_r=p}}
\frac{1}{p}\binom{p}{\alpha_1,\ldots,\alpha_r}
M_1^{\alpha_1}\cdots M_r^{\alpha_r} \in A.
\]
Here, the part in the formula where we multiply the multinomial coefficient by $1/p$ is computed in $\Z$.
\end{notation}


\begin{proposition}\label{const-f}
Let $A:=\var{\F}_p[x,y,z,w]$ and $\m:=(x,y,z,w)$.
\begin{enumerate}
    \item If $p \neq 2$, then there exists a homogeneous element $f \in A$ of degree $4$
    such that $f^{p-2} \in \m^{[p]}$ and $\Proj(A/f)$ is a smooth K3 surface.
    \item If $p=2$, then there exists a homogeneous element $f \in A$ of degree $4$
    such that 
    \[
    \Delta(f)^{2^{8}-1} \in \m^{[2^9]},
    \]
    $\Proj(A/f)$ is a smooth K3 surface, and $\sht(A/f) \geq 9$.
\end{enumerate}
\end{proposition}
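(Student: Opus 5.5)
This is an existence statement, so the plan is to write down explicit quartics and verify the required properties directly: smoothness, the Frobenius-type condition, and in case (2) the height bound.

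\textbf{Case $p\neq 2$.} I would take a Fermat-type or diagonal-plus-cross-term quartic adapted to $p$. The natural first attempt is to find $f$ such that every monomial of $f^{p-2}$ has some exponent $\ge p$. Since $\deg f^{p-2}=4(p-2)=4p-8$, a monomial of that degree can avoid $\m^{[p]}$, because the maximal such degree is $4(p-1)=4p-4$. So the condition is a genuine restriction, not an automatic one.

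The strategy is to use monomials with very unbalanced exponents, such as $x^3y$, $y^3z$, $z^3w$, $w^3x$, or $x^4$. Powers of these concentrate degree in few variables. For $f=x^3y+y^3z+z^3w+w^3x$, a monomial of $f^{p-2}$ is $\prod (x^3y)^{a}(y^3z)^{b}(z^3w)^{c}(w^3x)^{d}$ with $a+b+c+d=p-2$. The exponent of $x$ is $3a+d$, of $y$ is $3b+a$, and so on. Avoiding $\m^{[p]}$ requires all four exponents to be at most $p-1$. Summing gives $4(p-2)\le 4(p-1)$, so this may be possible, and that choice might fail.

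I would therefore instead combine terms so that any surviving monomial with all exponents $<p$ has a vanishing multinomial coefficient mod $p$. An alternative route is Fedder-type reasoning: $f^{p-2}\in\m^{[p]}$ should correspond to the vanishing of a Cartier/Hasse-type operator. So I would search, case by case on $p \bmod 4$ or $p \bmod 3$, for a quartic supported on few monomials where all such coefficients vanish. Smoothness is then checked by the Jacobian criterion on the explicit form. Perturbing by generic terms of high $p$-divisibility (e.g.\ $p$-th powers do not exist in degree 4 for $p>4$) is unavailable, so the explicit choice must be smooth as is.

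\textbf{Case $p=2$.} I would take $f=x^4+x^3y+y^3z+z^3w$ from the introduction's example.
\begin{itemize}
\item Smoothness follows from the Jacobian criterion in characteristic 2. The partials are $x^2y$, $x^3+y^2z$, $y^3+z^2w$ and $z^3$. Together with $f=0$ these force $x=y=z=w=0$; this is a short check.
\item The height condition $\sht\ge 9$ would follow from the Artin invariant/height computation, or directly from Fedder-type criteria: height $h$ is detected by whether $(f^{2^n-1})$-type elements lie in Frobenius powers, and I would verify these membership failures for small $n$ by explicit monomial bookkeeping.
\item Computing $\Delta(f)$ explicitly gives $\Delta(f)=\sum_{i<j}M_iM_j$ over the four monomials. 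One then shows $\Delta(f)^{255}\in\m^{[512]}$ by tracking exponents: each monomial of $\Delta(f)^{255}$ is a product of pairs. The chain structure $x\to y\to z\to w$ forces some variable exponent $\ge 512$ unless the coefficient vanishes mod 2, using Lucas's theorem on multinomial coefficients.
\end{itemize}

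\textbf{Main obstacle.} The main obstacle is the two combinatorial membership verifications: $f^{p-2}\in\m^{[p]}$ uniformly in $p$, and the $2^8-1$ power computation. Both are realistically done by a Lucas-theorem parity analysis of exponent vectors, possibly computer-assisted for $p=2$.
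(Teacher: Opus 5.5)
\textbf{The $p=2$ case breaks at smoothness.} You took the quartic $x^4+x^3y+y^3z+z^3w$ from the introduction, where the exponents of the mixed terms appear transposed relative to the proof. That quartic is singular. Your partials $x^2y$, $x^3+y^2z$, $y^3+z^2w$, $z^3$ force $z=y=x=0$ but leave $w$ free, and $f(0,0,0,1)=0$. In the chart $w=1$ the equation $z^3+y^3z+x^3y+x^4$ has no linear part, so $[0:0:0:1]$ is a triple point and $\Proj(A/f)$ is not a smooth K3 surface.

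The paper uses $f=x^4+xy^3+yz^3+zw^3$. Its partials $y^3$, $xy^2+z^3$, $yz^2+w^3$, $zw^2$ vanish simultaneously only at $[1:0:0:0]$, where $f=1$. For this $f$ both memberships are pure support arguments; no Lucas-type coefficient analysis is needed. Set $q=2^9$, resp.\ $q=2^n$. A monomial of $\Delta(f)^{2^8-1}$, resp.\ of $f\Delta(f)^{2^{n-1}-1}$, has the form $(x^4)^{\gamma_0}(xy^3)^{\gamma_1}(yz^3)^{\gamma_2}(zw^3)^{\gamma_3}$ with $\sum\gamma_i=q-2$, resp.\ $q-1$. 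It avoids $\m^{[q]}$ only if the exponents $4\gamma_0+\gamma_1$, $3\gamma_1+\gamma_2$, $3\gamma_2+\gamma_3$, $3\gamma_3$ are all $\le q-1$. These four exponents sum to $4\sum\gamma_i$, so they are all $\ge q-5$, resp.\ all equal to $q-1$. Solving the system shows this is impossible when $q\equiv 26\pmod{27}$, resp.\ forces $27\mid q-1$. One concludes using $2^9\equiv 26$ and $2^n\not\equiv 1\pmod{27}$ for $n\le 9$.

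Your height criterion is also misstated. The condition $f^{2^n-1}\in\m^{[2^n]}$ is equivalent to $f\in\m^{[2]}$ and only detects $F$-purity. The bound on $\sht(A/f)$ comes instead from the Fedder-type criterion \cite{KTY}*{Theorem~A}, applied to $f\Delta(f)^{2^{n-1}-1}$.

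\textbf{The $p\neq 2$ case is not proved.} You never fix a quartic, and a case-by-case search cannot cover infinitely many primes. Two subcases are easy:
\begin{itemize}
\item For $p\equiv 3\pmod 4$, the Fermat quartic works by a one-line count. A monomial $x^{4a}y^{4b}z^{4c}w^{4d}$ outside $\m^{[p]}$ needs $a,b,c,d\le (p-3)/4$, hence $a+b+c+d\le p-3<p-2$.
\item For $p=5$, Schur's quartic $x^4+xy^3+z^4+zw^3$ works by the same kind of count.
\end{itemize}
For $p\equiv 1\pmod 4$, $p>5$, however, the Fermat quartic is $F$-split, and the paper exhibits no explicit quartic at all. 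The missing idea is a non-constructive existence argument, which runs in three steps:
\begin{enumerate}
\item By Jang's analysis of the pencil $x^4+y^4+z^4+w^4+\lambda xyzw$, some $\lambda\in\var{\F}_p$ gives a smooth supersingular K3 surface with Artin invariant $1$.
\item Via Ogus and Ito, this translates into the Hasse invariant vanishing to order $2$.
\item The Bhatt--Singh description of the $F$-pure threshold of Calabi--Yau hypersurfaces (\cite{Bhatt-Singh}*{Theorem~4.1}) converts this into $f^{p-2}\in\m^{[p]}$.
\end{enumerate}
Your remark about a Hasse-type operator points in this direction. But you would need to recognize $f^{p-2}\in\m^{[p]}$ as the geometric condition ``Artin invariant $1$'' and invoke an existence theorem for quartics with that property; without both, this case is not covered.
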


\begin{proof}
First, assume that $p \equiv 3 \pmod{4}$ and set
\[
f:=x^4+y^4+z^4+w^4.
\]
Then $\Proj(A/f)$ is smooth and $f^{p-2} \in \m^{[p]}$ as desired.

Next, assume that $p \equiv 1 \pmod{4}$ and $p>5$.
By the proof of \cite{Jang}*{Theorem 2.4}, there exists $\lambda \in \var{\F_p}$ satisfying the following.
\begin{itemize}
\item 
The polynomial $f:=x^4+y^4+z^4+w^4+\lambda xyzw$ defines a smooth K3 surface $\Proj(A/f)$. Moreover, $\Proj (A/f)$ is a supersingular K3 surface with Artin invariant $1$.
\end{itemize}
More precisely, it is shown in \cite{Jang} that there exists $\lambda \in \overline{\F}_p$ with $\lambda^4 \neq 256$ such that $f^{p-1} \in \m^{[p]}$, and that such a $\lambda$ satisfies the condition stated above.
Note that the condition that $\Proj(A/f)$ is a supersingular K3 surface with Artin invariant $1$ is equivalent to the condition that the Hasse invariant vanishes to order $2$, by \cite{OgusHasse}*{Theorem~1} (cf.\ \cite{Bhatt-Singh}*{Corollary~4.10}) and \cite[Proposition~2.4]{ItoBrauer}.
Therefore, by \cite[Theorem 4.1]{Bhatt-Singh}, we have $f^{p-2} \in \m^{[p]}$ as desired.

Next, assume that $p=5$ and set
\begin{equation}
\label{eqn:char5K3}
    f:=x^4 + xy^3 + z^4 + zw^3.
\end{equation}
Then $\Proj(A/f)$ is smooth and $f^{p-2} \in \m^{[p]}$ as desired.
Note that, by \cite[Theorem 4.1]{Bhatt-Singh} again, the Hasse invariant vanishes to order 2.
As in the above argument, this implies that $\Proj(A/f)$ is a supersingular K3 surface with Artin invariant 1.

Finally, assume that $p=2$.
In this case, we set
\[
f:=x^4 + xy^3 + yz^3 + zw^3.
\]
Then $\Proj(A/f)$ is smooth.
We prove the following claim.
\begin{claim}\label{p=2,5}
Let $q$ be a positive integer with $q \equiv 26 \pmod{27}$.
Then there are no non-negative integers
$\gamma_0,\gamma_1,\gamma_2,\gamma_3$ satisfying
\[
4\gamma_0+\gamma_1 \leq q-1,\quad
3\gamma_1+\gamma_2 \leq q-1,\quad
3\gamma_2+\gamma_3 \leq q-1,\quad
3\gamma_3 \leq q-1,
\]
and
\[
\gamma_0+\gamma_1+\gamma_2+\gamma_3 = q-2.
\]
\end{claim}

\begin{claimproof}
Suppose, to the contrary, that there exist non-negative integers
$\gamma_0,\gamma_1,\gamma_2,\gamma_3$ satisfying the above conditions.
Since $q \equiv 26 \pmod{27}$, we may write
\[
q+1 = 27m
\]
for some positive integer $m$.
By summing the four inequalities, we obtain
\[
(4\gamma_0+\gamma_1)+(3\gamma_1+\gamma_2)
+(3\gamma_2+\gamma_3)+3\gamma_3
=4(\gamma_0+\gamma_1+\gamma_2+\gamma_3)
=4(q-2).
\]
Since each summand is at most $q-1$, it follows that
\[
4\gamma_0+\gamma_1 \geq q-5,\quad
3\gamma_1+\gamma_2 \geq q-5,\quad
3\gamma_2+\gamma_3 \geq q-5,\quad
3\gamma_3 \geq q-5.
\]
In particular, we have
\[
3\gamma_3 \in \{q-5,q-4,q-3,q-2,q-1\}.
\]
Thus,
\[
\gamma_3 = 9m-2 \quad \text{or} \quad 9m-1.
\]

First, assume that $\gamma_3=9m-2$.
Then $3\gamma_3=q-5$, and hence all the above inequalities are equalities:
\[
4\gamma_0+\gamma_1 = q-1,\quad
3\gamma_1+\gamma_2 = q-1,\quad
3\gamma_2+\gamma_3 = q-1.
\]
Solving these equations, we obtain
\[
\gamma_2=6m,\quad
\gamma_1=7m-\frac{2}{3},
\]
which contradicts the integrality of $\gamma_1$.

Next, assume that $\gamma_3=9m-1$.
Then
\[
q-5 \leq 3\gamma_2+\gamma_3 \leq q-1
\]
implies
\[
18m-5 \leq 3\gamma_2 \leq 18m-1,
\]
and hence $\gamma_2=6m-1$.
Similarly, from
\[
q-5 \leq 3\gamma_1+\gamma_2 \leq q-1
\]
we obtain
\[
21m-5 \leq 3\gamma_1 \leq 21m-1,
\]
which yields $\gamma_1=7m-1$.
Using
\[
\gamma_0+\gamma_1+\gamma_2+\gamma_3 = q-2,
\]
we obtain $\gamma_0=5m$.
However, we then have
\[
4\gamma_0+\gamma_1 = 20m+7m-1 = 27m-1 = q,
\]
which contradicts the inequality $4\gamma_0+\gamma_1 \leq q-1$.
This completes the proof.
\end{claimproof}

\noindent
Since every monomial $M$ appearing in
\[
\Delta(f)^{2^{8}-1}
\]
is a product of $(2^9-2)$ monomials appearing in $f$, we may write
\[
M=(x^4)^{\gamma_0}(x^3y)^{\gamma_1}(yz^3)^{\gamma_2}(zw^3)^{\gamma_3}
\]
for some non-negative integers $\gamma_0,\ldots,\gamma_3$ with
\[
\gamma_0+\cdots+\gamma_3=2^9-2.
\]
By
\begin{equation}\label{eq:p=2,5}
    (2,2^2,2^3,\ldots,2^9) \bmod 27 = (2,4,8,16,5,10,20,13,26) 
\end{equation}
it follows from  Claim~\ref{p=2,5} that
$M \in \m^{[2^9]}$.
Therefore,
\[
\Delta(f)^{2^{8}-1}
\in \m^{[2^9]},
\]
as desired.

Next, we prove that $\sht(A/f)\ge 9$.
We first prove the following claim.

\begin{claim}\label{claim:non-qfs-p=2,5}
Let $q$ be a positive integer with $q \not\equiv 1 \pmod{27}$.
Then there do not exist non-negative integers
$\gamma_0,\gamma_1,\gamma_2,\gamma_3$ such that
\[
4\gamma_0+\gamma_1 \leq q-1,\quad
3\gamma_1+\gamma_2 \leq q-1,\quad
3\gamma_2+\gamma_3 \leq q-1,\quad
3\gamma_3 \leq q-1,
\]
and
\[
\gamma_0+\gamma_1+\gamma_2+\gamma_3 = q-1.
\]
\end{claim}

\begin{claimproof}
Suppose, to the contrary, that there exist non-negative integers
$\gamma_0,\gamma_1,\gamma_2,\gamma_3$ satisfying the above conditions.
Choose an integer $r$ with $0\le r\le 26$ such that $q+r=27m$ for some positive integer $m$.

By adding the four inequalities, we obtain
\[
(4\gamma_0+\gamma_1)+(3\gamma_1+\gamma_2)+(3\gamma_2+\gamma_3)+(3\gamma_3)
\le 4(q-1).
\]
On the other hand, using $\gamma_0+\gamma_1+\gamma_2+\gamma_3=q-1$, the left-hand side equals
\[
4\gamma_0+4\gamma_1+4\gamma_2+4\gamma_3=4(q-1).
\]
Hence all the above inequalities must be equalities, that is,
\[
4\gamma_0+\gamma_1
=3\gamma_1+\gamma_2
=3\gamma_2+\gamma_3
=3\gamma_3
=q-1.
\]

Solving these equalities, we obtain
\[
\gamma_3=\frac{1}{3}(q-1),\quad
\gamma_2=\frac{2}{9}(q-1),\quad
\gamma_1=\frac{7}{27}(q-1),\quad
\gamma_0=\frac{5}{27}(q-1).
\]
In particular, $\gamma_0$ is an integer, so $27\mid (q-1)$.
This implies $q\equiv 1 \pmod{27}$, which contradicts the assumption.
\end{claimproof}

Since every monomial $M$ appearing in
\[
f\Delta(f)^{2^{n-1}-1}
\]
is a product of $(2^{n}-1)$ monomials appearing in $f$, it follows from Claim~\ref{claim:non-qfs-p=2,5} and \eqref{eq:p=2,5} that
\[
f\Delta(f)^{2^{n-1}-1} \in \m^{[2^n]}
\]
for $n \leq 9$.
Therefore, the result follows from \cite{KTY}*{Theorem~A}.
\end{proof}

\begin{remark} 
In the case where $p \equiv 1 \pmod{4}$ and $p>5$, we expect that $\lambda$ can be taken in $\F_p$.
For example, when $p<1000$, one can construct the desired $\lambda \in \F_p$ using a computer algebra system.
However, we cannot prove this fact in general.
\end{remark}

\begin{remark}
\label{rem:Schur}
The K3 surface defined by (\ref{eqn:char5K3}) is called Schur’s quartic surface (\cite{Schur}, cf.\ \cite{Nukui}).
When $p=5$, equation~(\ref{eqn:char5K3}) defines a smooth supersingular K3 surface with Artin invariant~$1$, as shown in the proof above.
Using this quartic surface, we can extend the main result of \cite{Jang} to characteristic 
five, as in the following theorem.
\end{remark}

\begin{theorem}[cf.\ {\cite{Jang}}]
\label{thm:Jang}
Let $k$ be an algebraically closed field of characteristic $p >2$.
Then every supersingular K3 surface over $k$ is isomorphic to a quartic surface in $\P^3$.
\end{theorem}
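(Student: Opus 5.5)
We follow the strategy of \cite{Jang}, with Schur's quartic supplying the missing case $p=5$. Jang shows that for $p>2$ every supersingular K3 surface $X$ with Artin invariant $\sigma\ge 2$ admits a quasi-polarization of degree $4$ that embeds $X$ as a quartic surface. The remaining case is $\sigma=1$. Since supersingular K3 surfaces with $\sigma=1$ are unique up to isomorphism over an algebraically closed field (Ogus' crystalline Torelli theorem), it suffices to produce one smooth quartic in $\P^3_k$ that is supersingular with Artin invariant $1$. For $p\ge 3$ with $p\ne 5$, Jang already does this: the Fermat quartic handles $p\equiv 3\pmod 4$, and the Dwork-type family $x^4+y^4+z^4+w^4+\lambda xyzw$ handles $p\equiv 1\pmod 4$ with $p>5$. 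The only gap is $p=5$, where the Dwork-type argument breaks down.

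For $p=5$ we use the quartic \eqref{eqn:char5K3} from \Cref{const-f}. Its proof shows that $\Proj(A/f)$ is smooth and that $f^{p-2}\in\m^{[p]}$. By \cite[Theorem 4.1]{Bhatt-Singh}, this means the Hasse invariant vanishes to order $2$. Then \cite{OgusHasse}*{Theorem~1} together with \cite[Proposition~2.4]{ItoBrauer} shows that the surface is supersingular with Artin invariant $1$. This is exactly what \Cref{rem:Schur} records. Base-changing from $\overline{\F}_5$ to $k$ preserves smoothness, supersingularity and the Artin invariant, since the Artin invariant is determined by the discriminant of the Néron--Severi lattice, which does not change under extension of algebraically closed fields for supersingular K3 surfaces. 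Uniqueness then shows that every $\sigma=1$ surface over $k$ is isomorphic to Schur's quartic.

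The main obstacle is making sure Jang's treatment of $\sigma\ge 2$ is uniform in $p>2$, and in particular holds for $p=5$. We would go back to Jang's lattice-theoretic argument. In the supersingular K3 lattice of Artin invariant $\sigma\ge 2$, he finds a class $h$ with $h^2=4$ that is nef, base-point free and not hyperelliptic: there is no $E$ with $E^2=0$ and $h\cdot E\in\{1,2\}$, and no $(-2)$-curve meeting $h$ trivially. We would check that this construction uses only the structure of $p$-elementary lattices and the Saint-Donat criteria, neither of which depends on $p\equiv 1 \pmod 4$ or $p>5$. We expect the case restriction in \cite{Jang} to enter only through the $\sigma=1$ construction. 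If instead some step for $\sigma\ge 2$ did require $p\ne 5$, a fallback would be to specialize: realize the $\sigma\ge 2$ surfaces by deforming Schur's quartic inside the moduli of quasi-polarized supersingular K3 surfaces. Openness of the very ample, non-hyperelliptic degree-$4$ condition would then carry the quartic embedding to the general member of each stratum. We would use this only if needed.
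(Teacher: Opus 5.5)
Your proposal is essentially the paper's argument. The paper's proof of \Cref{thm:Jang} simply observes that Jang's proof goes through at $p=5$ once Schur's quartic, shown in the proof of \Cref{const-f} to be a smooth supersingular quartic with Artin invariant $1$, supplies the $\sigma=1$ input; in other words, the restriction $p\neq 5$ in \cite{Jang} enters only through the $\sigma=1$ construction, as you expect. The one weak spot is your fallback, which you rightly do not rely on: deforming Schur's quartic and using openness of very ampleness would reach only the general member of each Artin-invariant stratum, not every supersingular K3 surface, so it could not replace Jang's argument for $\sigma\ge 2$.
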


\begin{proof}
This follows from Remark \ref{rem:Schur} and the proof of \cite{Jang}*{Theorem 2.4}.
\end{proof}

\begin{proposition}\label{non-qfs-extension}
Let $k$ be a perfect field of characteristic $p>0$, let
\[
A:=k[x_1,\allowbreak\ldots,\allowbreak x_N],
\]
and let $f \in A$ be a homogeneous element of degree $N$.
Set $\m:=(x_1,\allowbreak\ldots,\allowbreak x_N)$, and let $n$ be a positive integer.
If $\sht(A/f) \geq n$ and 
\[
f_n:=f^{p-2}\Delta(f^{p-2})^{\frac{p^{n-1}-1}{p-1}} \in \m^{[p^n]},
\]
then $A[t]/(f+t^{l})$ is not quasi-$F$-split for every integer
$l \geq p^n$.
\end{proposition}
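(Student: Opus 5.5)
The plan is to apply the Fedder-type criterion for quasi-$F$-splitting of hypersurfaces from \cite{KTY}*{Theorem~A} to $B:=A[t]$, $\mathfrak n:=(x_1,\ldots,x_N,t)$ and $g:=f+t^l$. I recall the criterion as follows: $B/g$ is $m$-quasi-$F$-split if and only if an explicitly defined element, built from $g^{p-1}$ and iterates of $\Delta(g^{p-1})$ as in the expression $f_n$, fails to lie in $\mathfrak n^{[p^m]}$ (more precisely, the relevant ideal built from these elements is not contained in $\mathfrak n^{[p^m]}$). Non-quasi-$F$-splitness then amounts to proving this containment for every $m\ge 1$; I will aim to show exactly that.

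First I expand everything in powers of $t$. We have $g^{p-1}=\sum_i \binom{p-1}{i} f^{i}t^{l(p-1-i)}$. For $\Delta(g^{p-1})$ I use the additivity defect $\Delta(a+b)=\Delta(a)+\Delta(b)-\sum_{0<i<p}\frac1p\binom pi a^ib^{p-i}$. Since $l\ge p^n$, every term containing $t^{l}$ to a positive power already carries $t^{p^n}$. After multiplying by the appropriate $p$-power Frobenius twists, I expect every term with a positive $t$-exponent either to lie in $(t^{p^m})$ or to be divisible by $t^{l j}$ with cofactor in $\mathfrak m^{[p^m]}$. The surviving cross terms are those in which exactly one factor of $t^{l}$ appears against $f^{p-2}$, using $\binom{p-1}{p-2}=p-1$ for the factor of $g^{p-1}$. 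This mechanism is why $f^{p-2}$ and $\Delta(f^{p-2})$ enter the hypothesis, rather than $f^{p-1}$ and $\Delta(f^{p-1})$.

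Second, I separate the terms with no $t$ from those with $t$. The terms with no $t$ are exactly the criterion elements for $A/f$. The hypothesis $\sht(A/f)\ge n$ says precisely (via \cite{KTY}*{Theorem~A} applied to $A/f$) that these lie in $\mathfrak m^{[p^{m}]}$ for $m<n$. For $m\ge n$ they are handled by a degree count: $\deg f=N$ equals the number of variables, so a monomial of degree $N(p^m-1)$ or more avoiding $\mathfrak m^{[p^m]}$ must be exactly $(x_1\cdots x_N)^{p^m-1}$. This pins down the only possible bad monomials. For the terms involving $t$, the mixed term at level $n$ is $t^{l}\cdot(\text{unit})\cdot f_n$, which lies in $\mathfrak m^{[p^n]}B$ by hypothesis. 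At higher levels $m>n$, the same term is acted on by further Frobenius-twisted multiplications by $\Delta(g^{p-1})$, and the ideals $\mathfrak n^{[p^m]}$ are stable under these operations in the sense of the criterion's recursion. The containment therefore propagates, so the sequence stabilizes and never escapes $\mathfrak n^{[p^m]}$.

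The main obstacle is the bookkeeping in the expansion of $\Delta(g^{p-1})$ and its iterates. Specifically, I must verify that all terms other than the $f^{p-2}t^{l}$-type cross term and the pure-$f$ term lie in $\mathfrak n^{[p^m]}$. I would first try to handle this with a weighted grading in which $\deg x_i=1$ and $\deg t=N/l$, so that $g$ is homogeneous of degree $N$. In that grading, a monomial outside $\mathfrak n^{[p^m]}$ has degree at most $(N+N/l)(p^m-1)$, and comparing with the degree of the criterion element should kill all but one or two $t$-exponents. Only those residual exponents then need the explicit hypothesis $f_n\in\mathfrak m^{[p^n]}$ together with $l\ge p^n$.
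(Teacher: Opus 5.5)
Your treatment of the levels $m<n$ matches the paper. Since $l\ge p^n> p^m$, every term containing $t^l$ lies in $\mathfrak{n}^{[p^m]}$, so modulo $\mathfrak{n}^{[p^m]}$ the level-$m$ element of $g=f+t^l$ is that of $f$, which lies in $\m^{[p^m]}$ because $\sht(A/f)\ge n$. The gap is at the levels $m>n$, which carry the whole content, and there each of your steps fails.
(i) Once $p^m>2l$, terms divisible by $t^{jl}$ with $j\ge 2$ are no longer automatically in $\mathfrak{n}^{[p^m]}$. So it is false that only the single cross term $(p-1)t^lf^{p-2}\cdots$ survives. Conversely, at level $n$ the cross terms lie in $\mathfrak{n}^{[p^n]}$ simply because $t^l$ does, so the hypothesis on $f_n$ is not what handles them.
(ii) The weighted-degree count cannot isolate one or two $t$-exponents. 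With $\deg x_i=1$ and $\deg t=N/l$, the weights sum to $N+N/l>N=\deg g$. Hence the level-$m$ element, of weighted degree $N(p^m-1)$, sits $(N/l)(p^m-1)$ below the top degree of monomials outside $\mathfrak{n}^{[p^m]}$, and this slack grows with $m$. For the pure-$f$ part, your count only reduces the question to the coefficient of $(x_1\cdots x_N)^{p^m-1}$. Its vanishing for all $m$ is exactly $\sht(A/f)=\infty$, which is not a hypothesis.
(iii) There is no general stability of $\mathfrak{n}^{[p^m]}$ under the recursion. If there were, containment at level $1$ would propagate, and every non-$F$-split Calabi--Yau cone would be non-quasi-$F$-split. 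This already fails for cones over supersingular elliptic curves, which are not $F$-split but are $2$-quasi-$F$-split.

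The missing idea is the paper's Frobenius-power argument, which never expands in $t$ above level $n$. Put $g_r:=g^{p-2}\Delta(g^{p-1})^{(p^{r-1}-1)/(p-1)}$, so that $gg_r=g^{p-1}\Delta(g^{p-1})^{(p^{r-1}-1)/(p-1)}$ is the level-$r$ element. Read $f_n$ with $\Delta(f^{p-1})$, as in the paper's proof; with $\Delta(f^{p-2})$ the hypothesis would be vacuous for $p=2$. Reducing modulo $t^l\in\mathfrak{n}^{[p^n]}$ once gives $g_n\equiv f_n\equiv 0 \pmod{\mathfrak{n}^{[p^n]}}$, i.e.\ $g_n\in\mathfrak{n}^{[p^n]}$ as an element of $A[t]$. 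For $m\ge n$ the paper then writes $gg_m$ as an $A[t]$-multiple of $g_n^{p^{m-n}}$, based on $\frac{p^{m-1}-1}{p-1}=\frac{p^{m-n}-1}{p-1}+p^{m-n}\cdot\frac{p^{n-1}-1}{p-1}$. For $p=2$ this is literally $g\Delta(g)^{2^{m-1}-1}=g\Delta(g)^{2^{m-n}-1}\cdot g_n^{2^{m-n}}$. For odd $p$ the extra powers of $g$ must be extracted from $\Delta(g^{p-1})$, which in characteristic $p$ equals $-g^{p(p-2)}\Delta(g)$ up to adding a $p$-th power. Since $g_n^{p^{m-n}}\in(\mathfrak{n}^{[p^n]})^{[p^{m-n}]}=\mathfrak{n}^{[p^m]}$, this disposes of the pure-$f$ and mixed terms simultaneously, and yields $\sht(A/f)=\infty$ as a by-product (cf.\ Remark~\ref{rem:htinfinity}). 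It is also the actual reason the hypothesis involves $f^{p-2}$ rather than the level-$n$ element $f^{p-1}\Delta(f^{p-1})^{\cdots}$: Frobenius powers of $gg_n$ carry too many factors of $g$ to divide $gg_m$. For $p=2$, $(gg_n)^{2^{m-n}}$ contains $g^{2^{m-n}}$, whereas $gg_m=g\Delta(g)^{2^{m-1}-1}$ has the single explicit factor $g$.
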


\begin{proof}
Set $g:=f+t^{l}$, $g_0:=1$, $g_1:=g^{p-2}$, and for an integer $r \geq 2$ define
\[
g_r:=g^{p-2}\Delta(g^{p-1})^{\frac{p^{r-1}-1}{p-1}}.
\]
Since
\[
g \equiv f \quad \text{and} \quad \Delta(g) \equiv \Delta(f) \pmod{(t^{l})},
\]
it follows that
\[
gg_r \equiv f^{p-1}\Delta(f)^{\frac{p^{r-1}-1}{p-1}} \pmod{\m^{[p^r]}}
\]
for $r \leq n$.
Since $\sht(A/f) \geq n$, the right-hand side is contained in $\m^{[p^r]}$ for $r \leq n-1$ by \cite{KTY}*{Theorem~5.8}.
Thus, we have $gg_r \in \m^{[p^r]}$ for $r \leq n-1$ and $\sht((A[t]/(g))_{\m}) \geq n$ by \cite{KTY}*{Theorem~A}.
Furthermore, we have
\[
g_n \equiv f_n \equiv 0 \pmod{(x_1,\ldots,x_N,t)^{[p^n]}}.
\]
Thus, for every $m \geq n$, we have
\[
gg_m
= g g_{m-n-1} g_n^{p^{m-n}}
\in (x_1,\ldots,x_N,t)^{[p^m]}.
\]
By \cite{KTY}*{Theorem~A}, the local ring
$(A[t]/(f+t^{l}))_\m$ is not quasi-$F$-split.
Therefore, $A[t]/(f+t^{l})$ is not quasi-$F$-split.
\end{proof}

\begin{remark}
\label{rem:htinfinity}
By the argument in the final part of \Cref{non-qfs-extension}, the conditions of \Cref{const-f} automatically imply that $\sht(A/f)=\infty$ (cf.\ \cite{KTY}*{Corollary 4.19}).
\end{remark}

\begin{theorem}[\Cref{intro}]\label{non-qfs-canonical}
Let $k$ be an algebraically closed field of characteristic $p>0$.
Then there exists a $\Q$-factorial canonical Gorenstein affine fourfold $X$ over $k$
which is not quasi-$F$-split.
\end{theorem}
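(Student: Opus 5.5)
We take $X:=\Spec A[t]/(f+t^{l})$, where $A=k[x,y,z,w]$ ($k$ algebraically closed), $f$ is the quartic of \Cref{const-f}, and $l=p^n$ with $n=1$ for $p\neq 2$ and $n=9$ for $p=2$. The polynomials of \Cref{const-f} have coefficients in $\var{\F}_p$, so they make sense over $k\supseteq\var{\F}_p$. Non-quasi-$F$-splitting of $X$ will come from \Cref{non-qfs-extension}, and the geometric properties from standard facts about hypersurfaces.

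\textbf{Non-quasi-$F$-splitting.} We check the hypotheses of \Cref{non-qfs-extension} with $N=4$. For $p\neq 2$ take $n=1$. Then $\sht(A/f)\ge 1$ holds trivially, and $f_1=f^{p-2}\Delta(f^{p-2})^{0}=f^{p-2}\in\m^{[p]}$ by \Cref{const-f}(1). For $p=2$ take $n=9$. Here $f^{p-2}=1$, so the $f_n$ of \Cref{non-qfs-extension} must be read compatibly with the $g_r$ in its proof, namely as $\Delta(f)^{2^{8}-1}$. With that reading, \Cref{const-f}(2) gives $f_9\in\m^{[2^9]}$ and $\sht(A/f)\ge 9$. \Cref{non-qfs-extension} then shows that $A[t]/(f+t^{p^n})$ is not quasi-$F$-split. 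Quasi-$F$-splitting is compatible with the base change $\var{\F}_p\to k$ between perfect fields, so the conclusion holds over $k$ as well; alternatively one can rerun the Fedder-type criterion of \cite{KTY} over $k$, since that computation is coefficientwise.

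\textbf{Gorenstein and canonical.} $X$ is a hypersurface in $\mathbb{A}^5$, hence Gorenstein. To see that $X$ is normal with isolated singularity, use the Jacobian criterion. The partial $\partial_t(t^l)=lt^{l-1}$ vanishes in characteristic $p$ since $p\mid l$, so singular points satisfy $\partial_{x_i}f=0$ for all $i$. Smoothness of $\Proj(A/f)$ forces $x=y=z=w=0$, and then $t^l=0$. Thus the only singular point is the origin, and $X$ is normal. For canonicity, blow up the origin with weights: give $x,y,z,w$ weight $l$ and $t$ weight $4$, so that $f+t^l$ is weighted homogeneous of degree $4l$. The weighted blow-up $Y\to X$ has exceptional divisor $E$ isomorphic to the weighted hypersurface $(f+t^l=0)\subset\P(l,l,l,l,4)$, and its discrepancy is
\[
a(E)=(4l+4)-4l-1=3>0.
\]
This is the step I expect to be the main obstacle. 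One must check that $Y$ itself has only canonical singularities (quotient singularities on the weighted chart $t\neq 0$, where the cyclic quotient of order $4$ or a divisor of it appears), and that $E$ is a reasonable degree-$l$ cyclic cover of the K3 surface, wildly ramified when $p\mid l$. A cleaner alternative I would try first is to compare with the cone over the K3 surface: $\Proj(A/f)$ has trivial canonical class, so the cone $A/f$ is canonical with discrepancy $0$. Then use inversion of adjunction in the form of a direct computation on a log resolution obtained by successively blowing up, tracking that every exceptional divisor has discrepancy $\ge 0$ because the $t$-direction only adds weight.

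\textbf{$\Q$-factoriality.} I would show $\mathrm{Cl}(X)$ is torsion, in fact trivial. The plan is a Grothendieck–Lefschetz/Flenner-type argument: for a hypersurface of dimension $4\ge 4$ with isolated singularity, the completed local ring is factorial (Grothendieck's theorem on parafactoriality of complete intersections of dimension $\ge 4$). Because $X$ is graded, factoriality of the local ring at the origin implies that $A[t]/(f+t^l)$ is a UFD, since the singular locus lies only at the origin. This gives $\Q$-factoriality and completes the proof.
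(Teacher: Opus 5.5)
Your non-quasi-$F$-splitting step is fine, including reading $f_n$ as $\Delta(f)^{2^8-1}$ when $p=2$. The geometric half, however, has a genuine gap, and it comes from the choice $l=p^n$.

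\textbf{The Jacobian step.} Smoothness of $\Proj(A/f)$ only excludes nonzero common zeros of $f$ \emph{and} all the $\partial_i f$. At a singular point of $X$ you only know $f=-t^l$. For $p$ odd you can repair this with Euler's identity $4f=\sum x_i\partial_i f$. For $p=2$ the conclusion is actually false. With $f=x^4+xy^3+yz^3+zw^3$ one has $\partial_x f=y^3$, $\partial_y f=xy^2+z^3$, $\partial_z f=yz^2+w^3$, $\partial_w f=zw^2$ and $\partial_t(t^{512})=0$. On $y=z=w=0$ all of these vanish and $f+t^{512}=(x+t^{128})^4$. Hence $X$ is singular along the whole curve $\{y=z=w=0,\ x=t^{128}\}$.

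This breaks your $\Q$-factoriality argument. Grothendieck's theorem needs $\mathcal{O}_{X,\mathfrak p}$ factorial for $\mathrm{ht}\,\mathfrak p\le 3$. The generic point of this curve is a height-$3$ prime whose local ring is a singular $3$-dimensional hypersurface, which the theorem does not cover and which you have not checked. Canonicity also becomes a different problem there.

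\textbf{Canonicity.} Independently of $p=2$, canonicity is not proved for any $p$. One weighted blowup with discrepancy $3$ says nothing about other divisors. Indeed, for $l>4$ the ordinary blowup of the origin already gives an exceptional divisor of discrepancy $0$. The inversion-of-adjunction sketch is not an argument, since neither resolution of singularities nor inversion of adjunction is available for fourfolds in characteristic $p$. For $p\equiv 3\pmod 4$, iterating ordinary blowups from $X_p$ ends at $f+t^3$, whose tangent cone is $t^3$, so this route does not close up either. For $p\equiv 1 \pmod 4$ it would work, by the argument below.

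\textbf{The missing idea.} Use the freedom $l\ge p^n$ in \Cref{non-qfs-extension} to take $l\equiv 1\pmod 4$; the paper takes $l=4p^9+1$. This $l$ is also prime to $p$, so $lt^{l-1}$ forces $t=0$ at singular points. Then smoothness of $\Proj(A/f)$ makes the origin the unique singular point in every characteristic.

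For $m>4$, blowing up the origin of $X_m=\{f+t^m=0\}$ behaves as follows:
\begin{itemize}
\item it is crepant, since the multiplicity is $4$ in $\mathbb{A}^5$;
\item its exceptional divisor is the projective cone over the K3 surface;
\item the charts other than $t\neq 0$ are smooth;
\item the chart $t\neq 0$ is $X_{m-4}$.
\end{itemize}
Iterating down to the smooth $X_1=\{f+t=0\}$ gives an explicit crepant resolution, and hence canonicity.

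With this choice of $l$, your parafactoriality argument becomes valid: a $4$-dimensional local complete intersection with an isolated singularity is factorial (SGA~2, Exp.~XI). That is a shorter route to $\Q$-factoriality than the paper's. The paper instead proceeds inductively along the blowup tower, using $\rho(E_m)=1$ and semiampleness of $f_m$-numerically trivial divisors, which comes from rational chain connectedness of $E_m$.
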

\begin{proof}
Take a homogeneous polynomial $f(x,y,z,w) \in k[x,y,z,w]$ of degree $4$
as in \cref{const-f}.
For every integer $m>0$, set
\[
X_m := \Spec k[x,y,z,w,t]/(f(x,y,z,w)+t^m).
\]
Set $n:=4p^9+1$.
By the same arguments as in \cite{CTW15a}*{Lemmas~5.1 and~5.2} (cf.\ \cite{KTTWYY3}*{Proposition~7.11}), 
the origin is the unique singular point of $X_{n-4r}$ for every $0 \leq r \leq p^9$ and $X_n$ is canonical.
Moreover, there exists a sequence
\[
X_n=:Y_n \xleftarrow{f_{n-4}} Y_{n-4} \xleftarrow{f_{n-8}} Y_{n-8} \xleftarrow{f_{n-12}} \cdots \xleftarrow{f_1} Y_1
\]
such that the following conditions {\rm(1)--(3)} hold for every $m \in \{n, n-4, n-8, \ldots, 1\}$:
\begin{enumerate}
\item There exists an open cover $Y_m = X_m \cup U_m$ for some smooth fourfold $U_m$. 
In particular, $Y_1$ is smooth, and $Y_m$ has a unique singular point for $m>1$.
\item The morphism $f_m \colon Y_m \to Y_{m+4}$ is the blowup at the unique singular point of $Y_{m+4}$
(i.e., the origin of $X_{m+4}$).
\item Let $E_m:=\Ex(f_m)$. Then $E_m \simeq \Proj\,k[x,y,z,w,u]/(f(x,y,z,w))$, 
which is the projective cone over the K3 surface defined by $f(x,y,z,w)$.
In particular, $E_m$ is a projective normal rationally chain connected threefold with $\rho(E_m)=1$.
\end{enumerate}

By \cref{non-qfs-extension}, $X_n$ is not quasi-$F$-split.
It suffices to prove that $X_n$ is $\Q$-factorial.
Assume that $Y_m$ is $\Q$-factorial. 
By induction on $m$, it suffices to show that $Y_{m+4}$ is $\Q$-factorial.
Since $f_m \colon Y_m \to Y_{m+4}$ is a projective birational morphism between quasi-projective normal varieties and $\rho(E_m)=1$, 
we obtain $\rho(Y_m/Y_{m+4})=1$.
By the standard argument (as in the first paragraph of the proof of \cite{KM98}*{Corollary~3.18}), 
it is enough to show that every Cartier divisor $D$ on $Y_m$ with $D \equiv_{f_m} 0$ is $f_m$-semi-ample. 
This follows from \cite{Kleiman}*{Remark~9.5.25} and \cite{CT17}*{Theorem~1.1}, since $E_m$ is rationally chain connected. This completes the proof.
\end{proof}

\begin{example}\label{2^8-1}
Let $k$ be a perfect field of characteristic $p=2$ and $A=k[x,y,z,w]$.
We set
$f=x^4+xy^3+yz^3+zw^3$ and $g:=f+t^m$ for $m \leq 2^9-1=511$.
Then $A[t]/(g)$ is $10$-quasi-$F$-split.
To see this, we define $A$-module homomorphisms $u \colon F_*A \to A$ as in \cite{KTY}*{Section~1.1} and
\[
\theta:=u\bigl(F_*(\Delta(g)\cdot-)\bigr) \colon F_*A \to A.
\]
Fix
\[
a:=x^2yz^2w^3\,t^{2^{10}-2m-1}=x^2yz^2w^3t^{1023-2m},
\qquad
a_1:=ag,
\]
and define inductively
\[
a_{n+1}:=\theta(F_*(a_n)).
\]
Then $a_1\in\Ker(u)$.

\noindent\textbf{Case 1: $m$ odd.}
In this case the sequence forms a single chain:
\begin{align*}
a_1&=x^2yz^2w^3t^{1023-2m}(f+t^m)\in\Ker(u),\\
a_2&=x^3y^2z^2wt^{511-m}\in\Ker(u),\\
a_3&=xyz^2t^{255}\in\Ker(u),\\
a_4&=x^2zwt^{127}\in\Ker(u),\\
a_5&=x^3yt^{63}\in\Ker(u),\\
a_6&=x^3wt^{31}\in\Ker(u),\\
a_7&=x^3zt^{15}\in\Ker(u),\\
a_8&=xz^2wt^{7}\in\Ker(u),\\
a_9&=x^2z^2t^{3}\in\Ker(u),\\
a_{10}&=xyzwt\notin\m^{[2]}.
\end{align*}

\noindent\textbf{Case 2: $m\equiv 2\pmod 4$.}
Write $m=2r$.
Then
\begin{align*}
a_2&=x^3y^2z^2wt^{511-m}+xy^2z^2wt^{511-r} \in\Ker(u),\\
a_3&=x^3yz^2t^{255-r}+xyz^2t^{255}\in\Ker(u),\\
a_4&=x^2zwt^{127}+zwt^{127+r}\in\Ker(u),\\
a_5&=x^3yt^{63}+xyt^{63+r}\in\Ker(u),\\
a_6&=x^3wt^{31}+xwt^{31+r}\in\Ker(u),\\
a_7&=x^3zt^{15}+xzt^{15+r}\in\Ker(u),\\
a_8&=xz^2wt^{7}\in\Ker(u),\\
a_9&=x^2z^2t^{3}+z^2t^{3+r}\in\Ker(u),\\
a_{10}&=xyzwt \notin \m^{[2]}.
\end{align*}

\noindent\textbf{Case 3: $m\equiv 0\pmod 4$.}
Write $m=4s$. Then
\begin{align*}
a_2&=x^3y^2z^2w\,t^{511-4s}+xy^2z^2w\,t^{511-2s}\in\Ker(u),\\
a_3&=yz^2\left(x^3t^{255-2s}+x^2t^{255-s}+xt^{255}+t^{255+s}\right)\in\Ker(u),\\
a_4&=zw\left(x^3t^{127-s}+x^2t^{127}+xt^{127+s}+t^{127+2s}\right)\in\Ker(u),\\
a_5&=y\left(x^3t^{63}+x^2t^{63+s}+xt^{63+2s}+t^{63+3s}\right)\in\Ker(u),\\
a_6&=w\left(x^3t^{31}+x^2t^{31+s}+xt^{31+2s}+t^{31+3s}\right)\in\Ker(u),\\
a_7&=z\left(x^3t^{15}+x^2t^{15+s}+xt^{15+2s}+t^{15+3s}\right)\in\Ker(u),\\
a_8&=xz^2wt^{7}+z^2wt^{7+s}\in\Ker(u),\\
a_9&=x^2z^2t^{3}+z^2\,t^{3+2s}\in\Ker(u),\\
a_{10}&=xyzwt+yzwt^{1+s} \notin \m^{[2]}.
\end{align*}  
Therefore, $A[t]/(f+t^m)$ is $10$-quasi-$F$-split.

On the other hand, it is not $9$-quasi-$F$-split for $m \geq 2^8$ by the proof of \cref{const-f}~(2).
Indeed, every monomial $M$ appearing in $(f+t^m)\Delta(f+t^m)^{2^8-1}$ is a product of $2^9-1$ monomials appearing in $f+t^m$.
If $M$ is not contained in $\m^{[2^9]}$, then $M$ is a product either of $2^9-1$ monomials appearing in $f$, or of $2^9-2$ monomials appearing in $f$ together with  $t^m$.
However, by Claim~\ref{p=2,5}, every product of $2^9-2$ monomials appearing in $f$ is contained in $\m^{[2^9]}$.
\end{example}

\bibliographystyle{skalpha}
\bibliography{bibliography.bib}
\end{document}